\newcommand {\PP}{{I\kern-.3em P}}
\newcommand {\ZZ}{{Z\kern-.45em Z}}
\newcommand {\RR}{{\mathbb{R}}}
\newcommand {\NN}{{I\kern-.3em N}}
\newcommand {\QQ}{{\mathbb{Q}}}
\newcommand{\beq}{\begin{equation}}
\newcommand{\eeq}{\end{equation}}
\newcommand{\beqq}{\begin{equation*}}
\newcommand{\eeqq}{\end{equation*}}
\bigskip\begin{enumerate}[\bfseries 1.]}{\end{enumerate}}
\newtheorem{thm}{Theorem}
\newtheorem{lemma}{Lemma}
\begin{document}
\title{The relative growth rate for partial quotients}
\author{Andrew Haas }
   \email{haas@math.uconn.edu}
  \address{University of Connecticut, Department of Mathematics,
 Storrs, CT 06269}
 
 \begin{abstract}
The rate of growth of the partial quotients of an irrational number is studied relative to the rate of approximation of the number by its convergents. The focus is on the Hausdorff dimension of exceptional sets on which different growth rates are achieved. 
  \end{abstract}

 \subjclass{11K50, 11K60}
%\date{}
 \keywords{ metric diophantine
 approximation, Hausdorff dimension }
\maketitle

    In this note we look at the rate of growth of the  partial quotients $a_i$ of the irrational number
     $$x= [a_1, a_2, \ldots]= 
  \frac{1}{a_1+\frac{1}{a_2+\frac{}{a_3+ \dots}}}
$$
relative to the rate at which $x$ is approximated by its rational convergents.
 
For $x\in (0,1)$ irrational, let $\{\frac{p_n}{q_n}\}$ be the sequence of rational convergents given by the continued fraction expansion of $x$ \cite{HW}. 
It follows from  classical results of Khinchin and L\'evy \cite{bill}  that for almost all $x$

\beq\label{limits}
\lim_{n\rightarrow\infty}\frac{\log a_{n}}{n}=0\,\,\,\,\,\,
%\eeqq
\text{and}\,\,\,\,\,\, 
%\beqq
\lim_{n\rightarrow\infty}-\frac{\log |x-\frac{p_n}{q_n}|}{n}= \frac{\pi^2}{6\log 2}.
\eeq
Consequently, for almost all $x\in (0,1)$
\beq\label{loga}
\lim_{n\rightarrow\infty}\frac{\log a_{n+1}}{\log  |x-\frac{p_n}{q_n}|}=
 0.
\eeq
  Here we study the Hausdorff dimension of exceptional sets on which the limit  (\ref{loga}) either  does not exist or is different from  zero. Similar, non-overlapping, problems are considered   in \cite{Kes} using more sophisticated methods of multifractal analysis.
  
  We shall write $\text{Dim}_HX$ for the Hausdorff dimension of a set $X\subset [0,1]$ and $\mathcal{H}^s(X)$ for the Hausdorff $s$-dimensional measure of $X$ \cite{falc}.
  
  Let  
    \beq\label{maindef}
  \mathscr{F}(z)=   \left \{x\in (0,1)\,  :\,    \limsup_{n\rightarrow\infty}\frac{-\log a_{n+1}}{ \log |x-\frac{p_n}{q_n}| }= z \right \}.
  \eeq
 
    \begin{thm}\label{dima}
 For $0< z\leq 1,$ 
 $ \text{Dim}_H\mathscr{F}(z)=  1-z$ and $\mathcal{H}^{1-z}(\mathscr{F}(z))=\infty.$ If $z\not\in [0,1]$ then $\mathscr{F}(z)=\emptyset.$
  \end{thm}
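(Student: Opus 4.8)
First I would eliminate the approximation term. From the standard identity $|x-\frac{p_n}{q_n}|=\frac{t_n}{q_n(q_n+q_{n-1}t_n)}$ with $t_n=[0;a_{n+1},a_{n+2},\dots]\in\big(\frac1{a_{n+1}+1},\frac1{a_{n+1}}\big)$, together with $q_{n-1}<q_n$, one gets
\beq
-\log\Bigl|x-\frac{p_n}{q_n}\Bigr|=2\log q_n+\log a_{n+1}+\delta_n,\qquad 0\le\delta_n<\log 4 .
\eeq
Writing $u_n=\frac{\log a_{n+1}}{\log q_n}$ (meaningful for $n\ge2$), the ratio in (\ref{maindef}) equals $\frac{u_n}{u_n+2+\delta_n/\log q_n}$, which differs by $o(1)$ from $g(u_n)$, where $g(u)=\frac{u}{u+2}$, because $\delta_n$ is bounded and $q_n\to\infty$. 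Since $g$ is bounded, continuous and non-decreasing, $\limsup_n\frac{-\log a_{n+1}}{\log|x-p_n/q_n|}=g\bigl(\limsup_n u_n\bigr)$; in particular this $\limsup$ always lies in $[0,1]$, so $\mathscr F(z)=\emptyset$ for $z\notin[0,1]$, and for $0<z\le1$, solving $g(w)=z$,
\beq
\mathscr F(z)=\Bigl\{x\in(0,1):\ \limsup_{n\to\infty}\tfrac{\log a_{n+1}}{\log q_n}=w\Bigr\},\qquad w:=\tfrac{2z}{1-z}\in(0,\infty],
\eeq
with $w=\infty$ exactly when $z=1$.

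\textbf{Identification with a Jarn\'ik–Besicovitch set.}
For $0<\tau<\infty$ put $G(\tau)=\{x:\limsup_n\frac{\log a_{n+1}}{\log q_n}\ge\tau\}$ and let $W(\nu)=\{x\in(0,1):|x-\frac pq|<q^{-\nu}\text{ for infinitely many }\frac pq\in\mathbb Q\}$. If $x\in G(\tau)$ then for each $\varepsilon\in(0,\tau)$ there are infinitely many $n$ with $a_{n+1}>q_n^{\tau-\varepsilon}$, hence $|x-\frac{p_n}{q_n}|<\frac1{a_{n+1}q_n^2}<q_n^{-(2+\tau-\varepsilon)}$; thus $G(\tau)\subseteq W(2+\tau-\varepsilon)$. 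Conversely, if $x\in W(2+\tau)$ then, since $2+\tau>2$, all but finitely many of the approximating fractions are convergents $\frac{p_n}{q_n}$ of $x$ by Legendre's theorem, and $|x-\frac{p_n}{q_n}|<q_n^{-(2+\tau)}$ together with $|x-\frac{p_n}{q_n}|>\frac1{2q_nq_{n+1}}$ forces $q_{n+1}>\tfrac12q_n^{1+\tau}$, whence $a_{n+1}>\tfrac14q_n^{\tau}$ and $u_n\to\tau$ along that subsequence; so $W(2+\tau)\subseteq G(\tau)$. Invoking the Jarn\'ik–Besicovitch theorem in its refined form, $\dim_H W(\nu)=2/\nu$ and $\mathcal H^{2/\nu}(W(\nu))=\infty$ for $\nu>2$, and letting $\varepsilon\to0$ in the first inclusion, we obtain, for every $\tau>0$,
\beq
\dim_H G(\tau)=\frac{2}{2+\tau},\qquad \mathcal H^{2/(2+\tau)}\bigl(G(\tau)\bigr)=\infty .
\eeq

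\textbf{Removing the overshoot; the case $z=1$.}
For $0<z<1$ set $s=\frac{2}{2+w}$, so that $s=1-z$ by the algebra above. Since $\{\limsup u_n=w\}=G(w)\setminus\bigcup_{m\ge1}G(w+\tfrac1m)$ and each $G(w+\tfrac1m)$ has dimension $\frac{2}{2+w+1/m}<s$, hence $\mathcal H^{s}$-measure zero, countable subadditivity gives $\mathcal H^{s}(\mathscr F(z))\ge\mathcal H^{s}(G(w))-\mathcal H^{s}\bigl(\bigcup_m G(w+\tfrac1m)\bigr)=\infty$; together with $\mathscr F(z)\subseteq G(w)$ this yields $\dim_H\mathscr F(z)=s=1-z$ and $\mathcal H^{1-z}(\mathscr F(z))=\infty$. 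For $z=1$ we have $\mathscr F(1)=\bigcap_{m}G(m)$, so $\dim_H\mathscr F(1)\le\inf_m\frac{2}{2+m}=0=1-1$; and fixing a sparse sequence $n_k\uparrow\infty$, setting $a_i=1$ for $i\notin\{n_k+1\}$ and choosing each $a_{n_k+1}\ge q_{n_k}^{\,k}$ freely produces uncountably many $x$ with $\limsup u_n=\infty$, so $\mathcal H^{0}(\mathscr F(1))=\infty$.

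\textbf{Where the work lies.}
The limsup bookkeeping of the first paragraph, the two inclusions of the second, and the excision of the lower-dimensional part in the third are all routine. The one substantial external input is the \emph{infinite-measure} half of Jarn\'ik's theorem, $\mathcal H^{2/\nu}(W(\nu))=\infty$ for $\nu>2$; if one wants a self-contained treatment, this is exactly the place where real effort is needed, and it would be supplied by constructing inside $W(2+w)$ a Cantor subset carrying a mass distribution $\mu$ with $\mu(I)\ll|I|^{s}$ over basic intervals $I$ and with arbitrarily large total mass, after which the mass distribution principle gives $\mathcal H^{s}=\infty$.
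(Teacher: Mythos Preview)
Your proof is correct and follows essentially the same route as the paper: reduce the $\limsup$ defining $\mathscr{F}(z)$ to a simpler growth quantity, sandwich the resulting level sets between Jarn\'ik--Besicovitch sets $W(\nu)$, invoke Jarn\'ik's zero--infinity law for both the dimension and the infinite critical measure, and then excise a countable union of strictly lower-dimensional overshoot sets (handling $z=1$ by an explicit construction). The only cosmetic difference is the auxiliary variable---you work with $u_n=\log a_{n+1}/\log q_n$ and the sets $G(\tau)=\{\limsup u_n\ge\tau\}$, whereas the paper passes through $\mathscr{G}(\alpha)$ via $\log q_n^2/\log|x-p_n/q_n|$ (its Lemma~1 and Theorem~2)---but these are related by the monotone reparametrization $\alpha=-2/(2+\tau)$ and the arguments are otherwise identical.
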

 
By an earlier remark, $\mathscr{F}(0)$ is a set of Lebesgue measure 1. 
 
There is an alternative characterization of the problem in terms that compare the rate of growth of the denominators of the convergents to the rate at which they approximate $x$. It is, in its own right  an interesting way to look at the problem. For $\alpha\in\RR$   define the set
\beqq
\mathscr{G}(\alpha)=\left \{x\in (0,1)\,  :\,    \limsup_{n\rightarrow\infty}\frac{\log q_n^2}{ \log |x-\frac{p_n}{q_n}| }= \alpha \right \}.
  \eeqq

\begin{lemma}
When $z\in [0,1]$, $\mathscr{F}(z)=\mathscr{G}(z-1).$
\end{lemma}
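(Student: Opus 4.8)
The idea is that the two $\limsup$'s defining $\mathscr{F}(z)$ and $\mathscr{G}(z-1)$ are, term by term, the same sequence shifted by $1$, up to an error tending to $0$. I would first collect the standard continued fraction estimates from \cite{HW}: writing $L_n:=-\log|x-\frac{p_n}{q_n}|$, one has $\frac{1}{2q_nq_{n+1}}<|x-\frac{p_n}{q_n}|<\frac{1}{q_nq_{n+1}}$, the recursion $q_{n+1}=a_{n+1}q_n+q_{n-1}$ (hence $a_{n+1}q_n\le q_{n+1}\le(a_{n+1}+1)q_n\le 2a_{n+1}q_n$), and $q_n\to\infty$. Taking logarithms turns these into $L_n=\log q_n+\log q_{n+1}+O(1)$ and $\log q_{n+1}=\log q_n+\log a_{n+1}+O(1)$ with absolute implied constants, so that
\beq\label{eq:Lnexpand}
L_n=2\log q_n+\log a_{n+1}+O(1),\qquad L_n\to\infty .
\eeq

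Next I would divide \eqref{eq:Lnexpand} by $L_n$ and rewrite it using the sign identities $\frac{-\log a_{n+1}}{\log|x-p_n/q_n|}=\frac{\log a_{n+1}}{L_n}$ and $\frac{\log q_n^2}{\log|x-p_n/q_n|}=-\frac{2\log q_n}{L_n}$; this should give, for every irrational $x\in(0,1)$ and all large $n$,
\beq\label{eq:pointwise}
\frac{-\log a_{n+1}}{\log|x-\frac{p_n}{q_n}|}=1+\frac{\log q_n^2}{\log|x-\frac{p_n}{q_n}|}+\varepsilon_n,\qquad \varepsilon_n=\frac{O(1)}{L_n}\longrightarrow 0 .
\eeq
From \eqref{eq:Lnexpand} I would also note that the two ratios occurring here are bounded ($0\le\frac{\log a_{n+1}}{L_n}\le 1+o(1)$ and $-1-o(1)\le\frac{\log q_n^2}{\log|x-p_n/q_n|}\le 0$). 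Then, since $\varepsilon_n\to0$ and an additive constant shifts a $\limsup$ by that constant, taking $\limsup_{n\to\infty}$ in \eqref{eq:pointwise} yields
\beq\label{eq:limsup}
\limsup_{n\to\infty}\frac{-\log a_{n+1}}{\log|x-\frac{p_n}{q_n}|}=1+\limsup_{n\to\infty}\frac{\log q_n^2}{\log|x-\frac{p_n}{q_n}|},
\eeq
so the left side equals $z$ precisely when the right side equals $z-1$, i.e. $\mathscr{F}(z)=\mathscr{G}(z-1)$.

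I do not expect any serious obstacle: the argument is essentially bookkeeping built on \eqref{eq:Lnexpand}. The two points that need care are the signs (for large $n$ one has $\log|x-\frac{p_n}{q_n}|<0$, so the two ratios carry opposite signs and one must not lose a minus sign) and the legitimacy of pushing the constant $1$ and the null sequence $\varepsilon_n$ through the $\limsup$, which is exactly why I record the boundedness of the two sequences above. I would also add the remark that \eqref{eq:limsup} holds for all $z\in\RR$, and that by Theorem~\ref{dima} both sides are empty unless $z\in[0,1]$ (equivalently $z-1\in[-1,0]$), so the hypothesis $z\in[0,1]$ in the lemma loses nothing.
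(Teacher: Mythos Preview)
Your argument is correct and is essentially the paper's own proof unpacked: the paper packages your identity $L_n=2\log q_n+\log a_{n+1}+O(1)$ as the single formula $\theta_n(x):=q_n^2\,|x-\tfrac{p_n}{q_n}|=(a'_{n+1}+q_{n-1}/q_n)^{-1}$ and then divides by $\log|x-\tfrac{p_n}{q_n}|$ to obtain the same shifted $\limsup$. One caveat on your closing remark: invoking Theorem~\ref{dima} there is circular (that theorem is proved \emph{via} this lemma and Theorem~\ref{g}); the emptiness for $z\notin[0,1]$ already follows directly from the boundedness $0\le \log a_{n+1}/L_n\le 1+o(1)$ you recorded.
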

\begin{proof}
Define the approximation constants $\theta_n(x)=q_n|q_nx-p_n|$. From the classical theory of continued fractions we have
\beq\label{theta}
\theta_n(x)=\frac{1}{a'_{n+1}+\frac{q_{n-1}}{q_n}}
\eeq
where $a'_{n+1}=a_{n+1}+[a_{n+2},\ldots]$ \cite{HW}.
Therefore 
\beqq
\limsup_{n\rightarrow\infty}\frac{-\log a_{n+1}}{ \log |x-\frac{p_n}{q_n}| }=
 \limsup_{n\rightarrow\infty}\frac{\log \theta_n(x)}{\log  |x-\frac{p_n}{q_n}| }=
 \limsup_{n\rightarrow\infty}\frac{\log q^2_n}{\log  |x-\frac{p_{n }}{q_{n }}| }+1 . 
  \eeqq
  \end{proof}
  
  At this point it is an easy matter to show that $\mathscr{F}(1)$ is an infinite set and therefore $\mathcal{H}^0(\mathscr{F}(1))=\infty.$ In fact, if one chooses the partial quotients so that $q_n^{2n}<a_{n+1},$ then using (\ref{theta}) it follows that the limit in (\ref{loga}) is equal to 1.
 In order to complete the proof of Theorem \ref{dima}, we shall work with the alternative formulation suggested by the lemma and prove
 \begin{thm}\label{g}
 For $\alpha\in (-1,0]$, $\text{Dim}_H\mathscr{G}(\alpha)=|\alpha|$ and  $\mathcal{H}^{|\alpha|}(\mathscr{G}(\alpha)) =\infty.$ If $\alpha\not\in [-1,0]$, then $\mathscr{G}(\alpha)=\emptyset.$
 \end{thm}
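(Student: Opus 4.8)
The plan is to recast the condition defining $\mathscr{G}(\alpha)$ as a growth condition on the partial quotients, prove the upper bound for $\text{Dim}_H$ by a direct covering argument, and obtain the lower bound together with the sharp Hausdorff measure statement by comparing $\mathscr{G}(\alpha)$ to a classical set of well approximable numbers. First I would record the reformulation: since $|x-\frac{p_n}{q_n}|=\theta_n(x)/q_n^2$ with $\frac1{a_{n+1}+2}<\theta_n(x)<\frac1{a_{n+1}}$ by (\ref{theta}), we have $\log|x-\frac{p_n}{q_n}|=-2\log q_n-\log a_{n+1}+O(1)$, and dividing by $\log q_n^2$ and letting $n\to\infty$ (using $q_n\to\infty$) gives, for every irrational $x$,
\[
\limsup_{n\to\infty}\frac{\log q_n^2}{\log|x-\frac{p_n}{q_n}|}=\frac{-2}{\,2+\limsup_{n\to\infty}(\log a_{n+1}/\log q_n)\,}.
\]
As $|x-\frac{p_n}{q_n}|<q_n^{-2}$ the left side always lies in $(-1,0)$, so $\mathscr{G}(\alpha)=\emptyset$ for $\alpha\notin[-1,0]$; and for $\alpha\in(-1,0]$, putting $s=|\alpha|\in[0,1)$ and $\sigma=2/s\in(2,\infty]$, one has $\mathscr{G}(\alpha)=\{x:\limsup_n(\log a_{n+1}/\log q_n)=\sigma-2\}$.

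For the upper bound, observe that if $x\in\mathscr{G}(\alpha)$ then for every $M<\sigma-2$ one has $a_{n+1}>q_n^{M}$, hence $|x-\frac{p_n}{q_n}|<q_n^{-(M+2)}$, for infinitely many $n$; so $\mathscr{G}(\alpha)\subseteq\bigcap_{N\ge1}\bigcup_{n\ge N}\{x:|x-\frac{p_n}{q_n}|<q_n^{-(M+2)}\}$. Covering the $n$-th set by the interval of length $2q_n^{-(M+2)}$ about $p_n/q_n$ inside each cylinder $I(a_1,\dots,a_n)$, and using $\sum_{(a_1,\dots,a_n)}|I(a_1,\dots,a_n)|=1$ together with the exponential growth of $q_n$ in $n$, the $t$-dimensional sum $\sum_{n\ge N}\sum_{(a_1,\dots,a_n)}(2q_n^{-(M+2)})^{t}$ is dominated by a convergent geometric series whose tail tends to $0$ as $N\to\infty$, provided $t>2/(M+2)$. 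Letting $M\uparrow\sigma-2$ yields $\mathcal H^{t}(\mathscr{G}(\alpha))=0$ for $t>s$, so $\text{Dim}_H\mathscr{G}(\alpha)\le s=|\alpha|$ (and $=0$ when $\alpha=0$).

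For the lower bound and the Hausdorff measure, fix $\alpha\in(-1,0)$, $s=|\alpha|$, $\sigma=2/s>2$, and set $W(\sigma)=\{x\in(0,1):|x-p/q|<q^{-\sigma}\text{ for infinitely many }p/q\in\mathbb Q\}$ and $W_{>}(\sigma)=\bigcup_{k\ge1}W(\sigma+\tfrac1k)$. Using $\frac1{a_{n+1}+2}<\theta_n<\frac1{a_{n+1}}$ again (and the fact that rationals this close to $x$ are convergents), one checks that $x\in W(\sigma)$ forces $\limsup_n(\log a_{n+1}/\log q_n)\ge\sigma-2$ while $x\notin W_{>}(\sigma)$ forces $\limsup_n(\log a_{n+1}/\log q_n)\le\sigma-2$, whence $W(\sigma)\setminus W_{>}(\sigma)\subseteq\mathscr{G}(\alpha)$. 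The covering estimate above gives $\mathcal H^{s}(W(\sigma+\tfrac1k))=0$ for every $k$, hence $\mathcal H^{s}(W_{>}(\sigma))=0$; and the Mass Transference Principle applied to the balls $\{B(p/q,q^{-\sigma})\}$ — whose $s$-contractions $B(p/q,q^{-\sigma s})=B(p/q,q^{-2})$ have upper limit of full Lebesgue measure by Dirichlet's theorem — gives $\mathcal H^{s}(W(\sigma))=\mathcal H^{s}((0,1))=\infty$ since $s<1$. Therefore $\mathcal H^{s}(\mathscr{G}(\alpha))\ge\mathcal H^{s}(W(\sigma))-\mathcal H^{s}(W_{>}(\sigma))=\infty$, which also forces $\text{Dim}_H\mathscr{G}(\alpha)\ge s$; with the upper bound this proves $\text{Dim}_H\mathscr{G}(\alpha)=|\alpha|$ and $\mathcal H^{|\alpha|}(\mathscr{G}(\alpha))=\infty$ for $\alpha\in(-1,0)$. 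For $\alpha=0$ one already knows $\mathscr{G}(0)=\mathscr{F}(1)$ is infinite, so $\mathcal H^{0}(\mathscr{G}(0))=\infty$ as well, completing the proof.

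The step I expect to be the main obstacle is the lower bound for $\mathcal H^{|\alpha|}$. A direct Cantor construction inside $\mathscr{G}(\alpha)$ using cylinders $I(a_1,\dots,a_{n+1})$ with $a_{n+1}$ of the prescribed order $q_n^{\sigma-2}$ does not work: these sub-cylinders all accumulate near the single endpoint $p_n/q_n$ of $I(a_1,\dots,a_n)$, and at that accumulation scale the natural measure is overly concentrated, costing essentially a factor of two in the exponent. Routing the argument through $W(\sigma)\setminus W_{>}(\sigma)$ — so that the heavy lifting is done by the Mass Transference Principle, whose underlying construction spreads mass over rationals of all residue classes and is not localized near a point — circumvents this loss; the remaining point requiring care is simply the verification that deleting the $\mathcal H^{|\alpha|}$-null set $W_{>}(\sigma)$ leaves a subset of $\mathscr{G}(\alpha)$ still carrying infinite $\mathcal H^{|\alpha|}$-measure.
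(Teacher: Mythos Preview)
Your proposal is correct and follows essentially the same strategy as the paper: both sandwich $\mathscr{G}(\alpha)$ via the classical well-approximable sets, namely $W(\sigma)\supseteq\mathscr{G}(\alpha)\supseteq W(\sigma)\setminus\bigcup_k W(\sigma+\tfrac1k)$ (your $W(\sigma)$ is the paper's $W(\psi_{(\alpha,0)})$, and your $W_{>}(\sigma)$ is exactly $\bigcup_n W(\psi_{(\alpha,1/n)})$), reducing everything to the Hausdorff measure of these sets. The only difference is cosmetic: the paper invokes Jarn\'ik's zero--infinity law as a black box for both the zero and the infinity statements, whereas you do the zero part by a direct covering over continued-fraction cylinders and the infinity part via the Mass Transference Principle---equivalent tools here, since MTP applied with $s=2/\sigma$ recovers Jarn\'ik's theorem for the approximating function $q\mapsto q^{-\sigma}$.
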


 The set  $  \mathscr{G}(-1)=\mathscr{F}(0)$ has lebesgue measure 1. Interestingly, the results in \cite{Kes} imply that if the second limit in (\ref{limits}) exists for a number $x$ (not necessarily taking the value given in  (\ref{limits})), then $x\in \mathscr{G}(-1).$ 

The last sentence of Theorem \ref{g} is  elementary and is a consequence of the following basic property of the convergents  \cite{HW} 
 \beqq
 |x-\frac{p_n}{q_n}|<\frac{1}{q_n^2}.
 \eeqq

 The main tool in the proof of Theorem \ref{g} is Jarnik's "zero-infinity" law \cite{berdicvel, bervel, jarnik}. We need to establish some notation and reframe the problem so that Jarnik's Theorem will apply.
 
The abbreviation FIM will be used in place of  the phrase, "for  infinitely many." Given $\tau\in(-1,0)$, and $0 \leq \epsilon<|\tau|$, define   
 \beqq
 \psi_{(\tau,\epsilon)}(r)=r^{\frac{2}{\tau+\epsilon}}.
 \eeqq
Consider the related equation
 \beq\label{mainineq}
 |x-\frac{p}{q}|< \psi_{(\tau,\epsilon)}(q)=q^{\frac{2}{\tau+\epsilon}}
 \eeq
 and the set 
 \beqq
 W( \psi_{(\tau,\epsilon)})= \left  \{x\in [0,1]:   \,  |x-\frac{p}{q}|< q^{\frac{2}{\tau+\epsilon}} \,\,\,   \text{FIM }\,\, \frac{p}{q}\in\QQ\right  \}. 
\eeqq
 
 We are not interested in just any rationals but rather in the convergents. 
Define
 \beqq 
W^*( \psi_{(\tau,\epsilon)}) =\left \{x\in [0,1]\, :\,  |x-\frac{p_n}{q_n}|< q_n^{\frac{2}{\tau+\epsilon}}   \,\, \text{FIM convergents}\, \frac{p_n}{q_n} \, \text{of}\, x\right \}.
\eeqq
 Observe that when $q$ is sufficiently large,
\beq\label{q}
 q^{\frac{2}{\tau+\epsilon}}<\frac{1}{2}q^{-2}.
 \eeq
If $\frac{p}{q}$ satisfies inequalities (\ref{mainineq}) and (\ref{q}) then it is a convergent of $x$ \cite{HW}.
Therefore, except for finitely many rationals,  $\frac{p}{q}$ satisfies (\ref{mainineq}) if and only if  it is a convergent of $x$. Consequently, $W( \psi_{(\tau,\epsilon)})=W^*( \psi_{(\tau,\epsilon)}).  $

Combining the last observation with  a simple manipulation of equation (\ref{mainineq})  yields
 \beqq\label{name} 
W( \psi_{(\tau,\epsilon)}) =\left \{x\in [0,1]\,:\,  \frac{\log q^2_n}{\log  |x-\frac{p_{n }}{q_{n }}| }>\tau+\epsilon\,\,\,   \text{FIM convergents}\,\,  \frac{p_n}{q_n}\,\, \text{of}\,\, x\right \}.
\eeqq
It is therefore clear that for $-1<\tau<\alpha\leq 0$
\beq\label{sub}
 W(\psi_{(\tau,0)})\supset \mathscr{G}(\alpha).
 \eeq

Now we turn to the computation of Hausdorff dimension.
 
\begin{lemma}\label{prop}
For any $\tau\in (-1,0),\, \text{dim}_HW(\psi_{(\tau,0)})=|\tau|$, $\mathcal{H}^{|\tau|}(W(\psi_{(\tau,0)}))=\infty$ and for $\epsilon>0,\, \mathcal{H}^{|\tau|}(W( \psi_{(\tau,\epsilon)}))=0$

\end{lemma}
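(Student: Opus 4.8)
The plan is to obtain all three assertions from Jarn\'ik's zero--infinity law, after first recording the elementary covering estimate that handles the easy inequality $\text{dim}_H W(\psi_{(\tau,0)})\le|\tau|$. Recall that for a nonincreasing $\psi:\NN\to\RR_{>0}$ with $q\psi(q)\to 0$ and for $s\in(0,1)$, Jarn\'ik's theorem gives
\[
\mathcal{H}^s\big(W(\psi)\big)=
\begin{cases}
0 & \text{if }\ \sum_{q\geq 1} q\,\psi(q)^{s}<\infty,\\
\infty & \text{if }\ \sum_{q\geq 1} q\,\psi(q)^{s}=\infty,
\end{cases}
\]
where $W(\psi)$ is, as here, the set of $x$ with $|x-p/q|<\psi(q)$ for infinitely many rationals $p/q$. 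Each $\psi_{(\tau,\epsilon)}(q)=q^{2/(\tau+\epsilon)}$ with $0\le\epsilon<|\tau|$ is nonincreasing, since $2/(\tau+\epsilon)<0$, and satisfies $q\,\psi_{(\tau,\epsilon)}(q)=q^{1+2/(\tau+\epsilon)}\to 0$; moreover $s=|\tau|\in(0,1)$ because $\tau\in(-1,0)$, so Jarn\'ik's theorem applies.

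Next I would evaluate the relevant series at $s=|\tau|$. For $\psi_{(\tau,0)}(q)=q^{2/\tau}$ one has $q\,\psi_{(\tau,0)}(q)^{|\tau|}=q\cdot q^{2|\tau|/\tau}=q^{-1}$, so $\sum_q q\,\psi_{(\tau,0)}(q)^{|\tau|}=\sum_q q^{-1}=\infty$, and the divergence case of Jarn\'ik's law yields $\mathcal{H}^{|\tau|}\big(W(\psi_{(\tau,0)})\big)=\infty$; in particular $\text{dim}_H W(\psi_{(\tau,0)})\ge|\tau|$. For $\epsilon>0$ the exponent $2|\tau|/(\tau+\epsilon)=-2|\tau|/(|\tau|-\epsilon)$ is strictly less than $-2$, so $q\,\psi_{(\tau,\epsilon)}(q)^{|\tau|}=q^{\,1-2|\tau|/(|\tau|-\epsilon)}$ has exponent strictly below $-1$, the series converges, and the convergence case of Jarn\'ik's law gives $\mathcal{H}^{|\tau|}\big(W(\psi_{(\tau,\epsilon)})\big)=0$.

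It remains to bound $\text{dim}_H W(\psi_{(\tau,0)})$ from above, for which Jarn\'ik's theorem is not needed: for every $N$ the intervals $\big(p/q-q^{2/\tau},\,p/q+q^{2/\tau}\big)$ with $q\ge N$ and $0\le p\le q$ cover $W(\psi_{(\tau,0)})$ by sets of diameter at most $2N^{2/\tau}\to 0$, and for $s>|\tau|$ the associated sum $\sum_{q\ge N}(q+1)\big(2q^{2/\tau}\big)^{s}$ is, up to a constant factor, $\sum_{q\ge N} q^{\,1-2s/|\tau|}$, whose exponent is less than $-1$; it therefore tends to $0$ as $N\to\infty$, so $\mathcal{H}^{s}\big(W(\psi_{(\tau,0)})\big)=0$ for all $s>|\tau|$. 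Hence $\text{dim}_H W(\psi_{(\tau,0)})\le|\tau|$, and with the lower bound above, $\text{dim}_H W(\psi_{(\tau,0)})=|\tau|$; all three claims follow. (The same covering computation with $\epsilon>0$, like the convergence half of Jarn\'ik's law, re-proves $\mathcal{H}^{|\tau|}(W(\psi_{(\tau,\epsilon)}))=0$, so Jarn\'ik's law is genuinely indispensable only for the divergence, infinite-measure statement.)

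The only real difficulty is bookkeeping: fixing the precise normalization and hypotheses of the cited version of Jarn\'ik's zero--infinity law --- whether the approximating function is attached to $|x-p/q|$ or to $|qx-p|$, whether the sum runs over all rationals or over reduced fractions with an Euler-$\varphi$ weight, and the exact monotonicity required --- and checking that $\psi_{(\tau,\epsilon)}$ meets them. For power functions these choices are immaterial: the number of reduced fractions with denominator $q$ is comparable to $q$, and for irrational $x$ every non-reduced witness of $|x-p/q|<\psi(q)$ reduces to one satisfying the same bound, so the two limsup sets coincide; still, the write-up should make this identification explicit before quoting the theorem.
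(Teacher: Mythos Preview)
Your proof is correct and follows essentially the same route as the paper: both apply Jarn\'ik's zero--infinity law to the power functions $\psi_{(\tau,\epsilon)}$ and read off the dichotomy from the convergence/divergence of $\sum_r r\,\psi(r)^s$. The only cosmetic difference is that the paper applies Jarn\'ik at every $s\in[0,1)$ and extracts the dimension directly from the resulting $0/\infty$ dichotomy, whereas you invoke Jarn\'ik only at $s=|\tau|$ and supply a separate elementary covering for the upper bound $\text{dim}_H W(\psi_{(\tau,0)})\le|\tau|$ --- but as you yourself note, that covering computation is just the convergence half of Jarn\'ik's theorem in disguise.
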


\begin{proof}
If $ \psi:\RR^+\rightarrow \RR^+$ is a decreasing function, then a basic version of Jarnik's Theorem \cite{bervel} says that   for $s\in [0,1)$
\beqq
\mathcal{H}^s(W( \psi_{(\tau,\epsilon)} ))= 
\begin{cases}
0 & \text{if}\,\,\,\, \sum_{r=1}^{\infty}r( \psi(r ) )^s<\infty\\
\infty & \text{if}\,\,\,\, \sum_{r=1}^{\infty}r( \psi(r ) )^s=\infty.
\end{cases} 
\eeqq 
When $\psi(r)= \psi_{(\tau,\epsilon)}$, the series' involved are easy to analyze and  it follows that for  $\tau\in (-1,0)$ and $0 \leq \epsilon<|\tau|$,
\beqq
 \mathcal{H}^{s}(W(\psi_{(\tau,\epsilon)}))=
\begin{cases}
  0&\text{for $s>-\tau-\epsilon.$ } \\
\infty &\text{for $s\leq-\tau-\epsilon$}
\end{cases}
\eeqq

From this we conclude that $\text{dim}_HW(\psi_{(\tau,0)})=|\tau|$ and moreover, that  $ W(\psi_{(\tau,0)})$   has infinite $|\tau|$-measure. Also, when  $\epsilon >0$ the sets $W(\psi_{(\tau,\epsilon)})$  have $|\tau|$-measure zero.
\end{proof}
 
\noindent {\em Proof of Theorem \ref{g}}.\, 
First, it follows from the inclusion (\ref{sub}) and Lemma \ref{prop} that 
\beq\label{first}
  \text{dim}_H\mathscr{G}(\alpha)\leq \text{dim}_HW(\psi_{(\tau,0)})=|\tau|
  \eeq
  for all  $-1<\tau<\alpha\leq 0$. In particular, this gives   $\text{dim}_H\mathscr{G}(0)=0$

 Now suppose $\alpha\in (-1,0)$ and pick $k<0$ so that $\frac{1}{k}<|\alpha|$. Define the set 
\beqq
E(\alpha)= W(\psi_{(\alpha,0)})\setminus\left [\bigcup_{n=k}^{\infty} W(\psi_{(\alpha,\frac{1}{n})})\right ].
\eeqq
It is clear that  $  E(\alpha)\subset \mathscr{G}(\alpha).$  Furthermore, applying Lemma \ref{prop},  we see that $\text{dim}_HE(\alpha)=|\alpha|$ and $\mathcal{H}^{|\alpha|}(E(\alpha))=\infty.$ Thus,
\beq\label{second}
|\alpha|=\text{dim}_HE(\alpha)\leq  \text{dim}_H\mathscr{G}(\alpha).
\eeq 
 Together equations (\ref{first}) and (\ref{second})
  allow us to conclude that $\text{dim}_H\mathscr{G}(\alpha)=|\alpha|.$ Since $E(\alpha)$ has infinite $|\alpha|$-measure, so must the larger set $\mathscr{G}(\alpha)$ 
  \qed

\section*{Acknowledgements}    
I am very grateful to the referee, whose feedback and advice resulted in a far stronger paper.


\begin{thebibliography}{00}

\bibitem{berdicvel} V. Beresnevich, H. Dickinson and S. Velani, {\em Measure theoretic laws for limsup sets}, 
 Memoirs of the AMS., vol. 179, number 840 (2006) 1-91 . 
 
 \bibitem{bervel} V. Beresnevich and S. Velani, 'Ubiquity and general logarithm law for geodesics,' arXiv:0707.1225v1
 
 
 
\bibitem{bill} C. Billingsley, {\em Ergodic Theory and
Information,}
J. Wiley \& Sons, New York-London, 1965
   

 \bibitem{falc}
K. Falconer, \emph{Fractal Geometry}, John Wiley, Chichester, 1989.

\bibitem{haas} A. Haas, 'Geodesic cusp excursions and metric diophantine approximation,' arXiv:0709.0313v1.
  
 \bibitem{HW} G.H. Hardy and E.M. Wright, {\em An Introduction to the Theory of Numbers,}
Oxford Univ. Press, 1979. 

 \bibitem{jarnik} V. Jarnik, 'Uber die simultanen diophantischen Approximationen' {\em Math. Z.}, {\bf 33} (1931), 503-543 
 
 \bibitem{Kes}  M. Kesseb\"ohmer and B. Stratmann, 'A multifractal analysis for Stern-Brocot intervals, continued fractions and Diophantine approximation' {\em J. Reine Angew. Math.} {\bf 605} (2007), 133-163.



 




 \end{thebibliography}
\end{document}